\newtheorem{theorem}{Theorem}%[section]
\newtheorem{Example}{Example}
\numberwithin{equation}{section}
\newcommand{\ds}{\displaystyle}
\begin{document}
\begin{frontmatter}
\title{A Note on Improved Multivariate Normal Mean Estimation With
Unknown Covariance When p Is Greater Than n%\thanksref{T1}
} \runtitle{Correction: A Note on Improved Multivariate Normal Mean Estimation}
%\thankstext{T1}{Footnote to the title with the ``thankstext'' command.}
%\thankstext{T1}{Footnote to the title with the ``thankstext'' command.}
\begin{aug}
\author{\fnms{Arash} \snm{A. Foroushani } \thanksref{t2,m2}
%\thanksref{t3,m1,m2}
\ead[label=e2]{aghaeif@uwindsor.ca}}
\and
\author{\fnms{S\'ev\'erien} \snm{Nkurunziza} \thanksref{t2,m2}
%\thanksref{t3,m1,m2}
\ead[label=e3]{severien@uwindsor.ca}}
%\author{\fnms{Third} \snm{Author}\thanksref{t1,m2}
%\ead[label=e3]{third@somewhere.com}
%\ead[label=u1,url]{http://www.foo.com}}

%\thankstext{t1}{Some comment}
\thankstext{t2}{Supported by Natural Sciences and Engineering Research Council of Canada}
%\thankstext{t3}{Natural Sciences and Engineering Research Council of Canada}
\runauthor{Arash A. Foroushani and S\'ev\'erien Nkurunziza}

\affiliation{ University of Windsor, Mathematics and Statistics department\\
	401 Sunset Avenue, Windsor, ON, N9B 3P4\\
	Email:{\color{blue}aghaeif@uwindsor.ca} E-mail:{\color{blue}severien@uwindsor.ca}\thanksmark{m2}}

%\address{Address of the Third author\\
%Usually a few lines long\\
%Usually a few lines long\\
%\printead{e3}\\
%\printead{u1}}
\end{aug}
%\title{Improved inference in tensor regression with multiple change-points}
%\author{Mai Ghannam and S\'ev\'erien Nkurunziza}
\maketitle

\begin{abstract}
In this paper, we highlight a major error in the proofs of the important results of \cite{ChetelatWells}
[D.Ch\'etelat and M. T. Wells~(2012). Improved Multivariate Normal Mean Estimation with
Unknown Covariance when $p$ is Greater than $n$. The Annals of Statistics, Vol. 40, No.~6, 3137--3160].
In particular, the proofs of some of their main results are based on Theorem~2 whose proof needs to be revisited. More precisely, there are some major mistakes in the derivation of this important result. Further, under a very realistic assumption about the rank of the estimator of the variance-covariance matrix, we correct the proof of the quoted result.
\end{abstract}
%\newpage
%\tableofcontents
%\newpage
\begin{keyword}[class=MSC]
\kwd[Primary ]{}
\kwd{62C20}
\kwd[; secondary ]{62H12}
\end{keyword}
%\begin{keyword}[class=MSC]
%\kwd[Primary ]{}
%\kwd{}
%\kwd[; secondary ]{}
%\end{keyword}

%\begin{keyword}
%\kwd{}
%\kwd{}
%\end{keyword}

\begin{keyword}
\kwd{Invariant quadratic loss}
\kwd{James-Stein estimation}
\kwd{Location parameter}
\kwd{Minimax estimation}
\kwd{Moore-Penrose inverse}
\kwd{Risk function}
\kwd{Singular Wishart distribution}
\kwd{some counter-examples}
\end{keyword}

\end{frontmatter}
%\tableofcontents
\section{Introduction}\label{sec:intro}
In this paper, we revisit the important results in \cite{ChetelatWells} which deals with Improved Multivariate Normal Mean Estimation with
Unknown Covariance when $p$ is Greater than $n$. Given the importance of the topic in decision theory and shrinkage methods, it is certain that the cited article will be used by a large number of researchers. Nevertheless, there is a growing interest in  statistics and thus, more often than not, one has practitioners who do not have a strong background in mathematical statistics. Because of that, a wrong mathematical result could spread very quickly.   Thus, for the benefit of the statistical science, we point-out a major error in the proofs of important results in \cite{ChetelatWells}.
More precisely, the derivations of some main results of \cite{ChetelatWells} are based on their Theorem~2. Nevertheless,  we point out a major mistake in the proof of this important result. Further, under a very realistic assumption of  about the rank of the estimator of the variance-covariance matrix, we correct the proof of the quoted result. Namely, by the incorrect use of Cauchy-Schwarz inequality, at their page 3153, \cite{ChetelatWells} obtain an incorrect bound for the quantity $X'(T^{+}TA)^{+}(T^{+}TA)X$. Indeed, in contrast with the claim in \cite{ChetelatWells},
$X'(T^{+}TA)^{+}(T^{+}TA)X\nleqslant X'(T^{+}TA)^{+}(T^{+}TA)^{+}XX'(AT^{+}T)(T^{+}TA)X$.
 Below, we give a counter-example which shows that the bound obtained in the quoted paper is incorrect.

The rest of this paper is organized as follows. \Cref{sec:mainresult} has three sections which give the main result of this paper. Specifically, \Cref{counter-examplebound} gives a counter-example which shows that the inequality used in \cite{ChetelatWells} does not hold. Further, \Cref{subsec:theorem} reformulates the Theorem~2 in \cite{ChetelatWells} and we revise, in this subsection, the proof given in the quoted paper. Finally, in \Cref{subsec:rankAss}, we give an example which shows that without imposing an assumption about the rank of the estimator of the covariance-variance matrix, an important random quantity involved in the proof of Theorem~2 of \cite{ChetelatWells} has infinite expectation. This shows the necessity to revisit the proof given in the quoted paper.
\section{Main result}\label{sec:mainresult}
\subsection{Counter-example}\label{counter-examplebound}
In this subsection, we give an example which shows that the bound obtained at page 3153 of \cite{ChetelatWells} is not correct. To this end, we use the same notations as used in \cite{ChetelatWells} and let $T$ be a symmetric matrix and $A$ a positive definite matrix. Specifically, for a given $X$ a column vector, in contrast with the statement in \cite{ChetelatWells},
%$$\frac{1}{X^{\top}(T^{+}TA)^{+}(AT^{+}T)^{+}X}\leq \frac{X^{\top}AT^{+}TAX}{X^{\top}(T^{+}TA)^{+}(T^{+}TA)X}$$
$$X^{\top}(T^{+}TA)^{+}(T^{+}TA)X \nleqslant X^{\top}(T^{+}TA)^{+}(AT^{+}T)^{+}XX^{\top}(AT^{+}T)(T^{+}TA)X.$$
\begin{Example}
    Let $A=I_4$, $X=\begin{bmatrix}
1 & 0 & 0 & 0
\end{bmatrix}^T$
and $T=\frac{1}{48}\begin{bmatrix}
7 & 7 & 1 & 1\\
7 & 7 & 1 & 1\\
1 & 1 & 7 & 7\\
1 & 1 & 7 & 7
\end{bmatrix}$. %\\
Therefore $T^{+}=\frac{1}{4}\begin{bmatrix}
7 & 7 & -1 & -1\\
7 & 7 & -1 & -1\\
-1 & -1 & 7 & 7\\
-1 & -1 & 7 & 7
\end{bmatrix}$ and then,
%\\
%Thus
\begin{flalign*}
T^{+}T=\frac{1}{192}\begin{bmatrix}
                   7 & 7 & -1 & -1\\
                    7 & 7 & -1 & -1\\
                   -1 & -1 & 7 & 7\\
                   -1 & -1 & 7 & 7
                  \end{bmatrix}
                  \begin{bmatrix}
                  7 & 7 & 1 & 1\\
                  7 & 7 & 1 & 1\\
                  1 & 1 & 7 & 7\\
                  1 & 1 & 7 & 7
                  \end{bmatrix}=\begin{bmatrix}
                  \frac{1}{2} & \frac{1}{2} & 0 & 0\\
                  \frac{1}{2} & \frac{1}{2} & 0 & 0\\
                  0 & 0 & \frac{1}{2} & \frac{1}{2}\\
                  0 & 0 & \frac{1}{2} & \frac{1}{2}
                  \end{bmatrix}.%&&
\end{flalign*}
Therefore
\begin{eqnarray*}
   T^{+}TA&=&\begin{bmatrix}
                  \frac{1}{2} & \frac{1}{2} & 0 & 0\\
                  \frac{1}{2} & \frac{1}{2} & 0 & 0\\
                  0 & 0 & \frac{1}{2} & \frac{1}{2}\\
                  0 & 0 & \frac{1}{2} & \frac{1}{2}
                  \end{bmatrix}\begin{bmatrix}
                  1 & 0 & 0 & 0\\
                  0 & 1& 0 & 0\\
                  0 & 0 & 1 & 0\\
                  0 & 0 & 0 & 1
                  \end{bmatrix}=\begin{bmatrix}
                  \frac{1}{2} & \frac{1}{2} & 0 & 0\\
                  \frac{1}{2} & \frac{1}{2} & 0 & 0\\
                  0 & 0 & \frac{1}{2} & \frac{1}{2}\\
                  0 & 0 & \frac{1}{2} & \frac{1}{2}
                  \end{bmatrix}\\%&&
%\end{flalign*}
%\begin{flalign*}
   AT^{+}T&=&\begin{bmatrix}
                  1 & 0 & 0 & 0\\
                  0 & 1& 0 & 0\\
                  0 & 0 & 1 & 0\\
                  0 & 0 & 0 & 1
                  \end{bmatrix}\begin{bmatrix}
                  \frac{1}{2} & \frac{1}{2} & 0 & 0\\
                  \frac{1}{2} & \frac{1}{2} & 0 & 0\\
                  0 & 0 & \frac{1}{2} & \frac{1}{2}\\
                  0 & 0 & \frac{1}{2} & \frac{1}{2}
                  \end{bmatrix}=\begin{bmatrix}
                  \frac{1}{2} & \frac{1}{2} & 0 & 0\\
                  \frac{1}{2} & \frac{1}{2} & 0 & 0\\
                  0 & 0 & \frac{1}{2} & \frac{1}{2}\\
                  0 & 0 & \frac{1}{2} & \frac{1}{2}
                  \end{bmatrix}\\%&&
%\end{flalign*}
%Therefore
%\begin{flalign*}
    (T^{+}TA)^{+}&=&\begin{bmatrix}
                  \frac{1}{2} & \frac{1}{2} & 0 & 0\\
                  \frac{1}{2} & \frac{1}{2} & 0 & 0\\
                  0 & 0 & \frac{1}{2} & \frac{1}{2}\\
                  0 & 0 & \frac{1}{2} & \frac{1}{2}
                  \end{bmatrix}^{+}=\begin{bmatrix}
                  \frac{1}{2} & \frac{1}{2} & 0 & 0\\
                  \frac{1}{2} & \frac{1}{2} & 0 & 0\\
                  0 & 0 & \frac{1}{2} & \frac{1}{2}\\
                  0 & 0 & \frac{1}{2} & \frac{1}{2}
                  \end{bmatrix}\\%&&
%\end{flalign*}
%\begin{flalign*}
    (AT^{+}T)^{+}&=&\begin{bmatrix}
                  \frac{1}{2} & \frac{1}{2} & 0 & 0\\
                  \frac{1}{2} & \frac{1}{2} & 0 & 0\\
                  0 & 0 & \frac{1}{2} & \frac{1}{2}\\
                  0 & 0 & \frac{1}{2} & \frac{1}{2}
                  \end{bmatrix}^{+}=\begin{bmatrix}
                  \frac{1}{2} & \frac{1}{2} & 0 & 0\\
                  \frac{1}{2} & \frac{1}{2} & 0 & 0\\
                  0 & 0 & \frac{1}{2} & \frac{1}{2}\\
                  0 & 0 & \frac{1}{2} & \frac{1}{2}
                  \end{bmatrix}.%&&
%\end{flalign*}
\end{eqnarray*}
%\newpage

Thus
\begin{flalign*}
    &(T^{+}TA)^{+}(AT^{+}T)^{+}=\begin{bmatrix}
                  \frac{1}{2} & \frac{1}{2} & 0 & 0\\
                  \frac{1}{2} & \frac{1}{2} & 0 & 0\\
                  0 & 0 & \frac{1}{2} & \frac{1}{2}\\
                  0 & 0 & \frac{1}{2} & \frac{1}{2}
                  \end{bmatrix}\begin{bmatrix}
                  \frac{1}{2} & \frac{1}{2} & 0 & 0\\
                  \frac{1}{2} & \frac{1}{2} & 0 & 0\\
                  0 & 0 & \frac{1}{2} & \frac{1}{2}\\
                  0 & 0 & \frac{1}{2} & \frac{1}{2}
                  \end{bmatrix}=\begin{bmatrix}
                  \frac{1}{2} & \frac{1}{2} & 0 & 0\\
                  \frac{1}{2} & \frac{1}{2} & 0 & 0\\
                  0 & 0 & \frac{1}{2} & \frac{1}{2}\\
                  0 & 0 & \frac{1}{2} & \frac{1}{2}
                  \end{bmatrix}\\
                  &(T^{+}TA)^{+}(T^{+}TA)=\begin{bmatrix}
                  \frac{1}{2} & \frac{1}{2} & 0 & 0\\
                  \frac{1}{2} & \frac{1}{2} & 0 & 0\\
                  0 & 0 & \frac{1}{2} & \frac{1}{2}\\
                  0 & 0 & \frac{1}{2} & \frac{1}{2}
                  \end{bmatrix}\begin{bmatrix}
                  \frac{1}{2} & \frac{1}{2} & 0 & 0\\
                  \frac{1}{2} & \frac{1}{2} & 0 & 0\\
                  0 & 0 & \frac{1}{2} & \frac{1}{2}\\
                  0 & 0 & \frac{1}{2} & \frac{1}{2}
                  \end{bmatrix}=\begin{bmatrix}
                  \frac{1}{2} & \frac{1}{2} & 0 & 0\\
                  \frac{1}{2} & \frac{1}{2} & 0 & 0\\
                  0 & 0 & \frac{1}{2} & \frac{1}{2}\\
                  0 & 0 & \frac{1}{2} & \frac{1}{2}
                  \end{bmatrix}\\
                  &(AT^{+}T)(T^{+}TA)=\begin{bmatrix}
                  \frac{1}{2} & \frac{1}{2} & 0 & 0\\
                  \frac{1}{2} & \frac{1}{2} & 0 & 0\\
                  0 & 0 & \frac{1}{2} & \frac{1}{2}\\
                  0 & 0 & \frac{1}{2} & \frac{1}{2}
                  \end{bmatrix}\begin{bmatrix}
                  \frac{1}{2} & \frac{1}{2} & 0 & 0\\
                  \frac{1}{2} & \frac{1}{2} & 0 & 0\\
                  0 & 0 & \frac{1}{2} & \frac{1}{2}\\
                  0 & 0 & \frac{1}{2} & \frac{1}{2}
                  \end{bmatrix}=\begin{bmatrix}
                  \frac{1}{2} & \frac{1}{2} & 0 & 0\\
                  \frac{1}{2} & \frac{1}{2} & 0 & 0\\
                  0 & 0 & \frac{1}{2} & \frac{1}{2}\\
                  0 & 0 & \frac{1}{2} & \frac{1}{2}
                  \end{bmatrix}.&&
\end{flalign*}
Therefore
\begin{flalign*}
    &X^{\top}(T^{+}TA)^{+}(T^{+}TA)X=\begin{bmatrix}
1 & 0 & 0 & 0
\end{bmatrix}\begin{bmatrix}
                  \frac{1}{2} & \frac{1}{2} & 0 & 0\\
                  \frac{1}{2} & \frac{1}{2} & 0 & 0\\
                  0 & 0 & \frac{1}{2} & \frac{1}{2}\\
                  0 & 0 & \frac{1}{2} & \frac{1}{2}
                  \end{bmatrix}\begin{bmatrix}
                      1\\
                      0\\
                      0\\
                      0
                  \end{bmatrix}=\begin{bmatrix}
\frac{1}{2} & \frac{1}{2} & 0 & 0
\end{bmatrix}\begin{bmatrix}
                      1\\
                      0\\
                      0\\
                      0
                  \end{bmatrix}=\frac{1}{2}.&&
\end{flalign*}
\begin{flalign*}
    &X^{\top}(T^{+}TA)^{+}(AT^{+}T)^{+}XX^{\top}(AT^{+}T)(T^{+}TA)X %\\
    %&
    =\begin{bmatrix}
1 & 0 & 0 & 0
\end{bmatrix}\begin{bmatrix}
                  \frac{1}{2} & \frac{1}{2} & 0 & 0\\
                  \frac{1}{2} & \frac{1}{2} & 0 & 0\\
                  0 & 0 & \frac{1}{2} & \frac{1}{2}\\
                  0 & 0 & \frac{1}{2} & \frac{1}{2}
                  \end{bmatrix}\begin{bmatrix}
                      1\\
                      0\\
                      0\\
                      0
                  \end{bmatrix}
                  \begin{bmatrix}
1 & 0 & 0 & 0
\end{bmatrix}\begin{bmatrix}
                  \frac{1}{2} & \frac{1}{2} & 0 & 0\\
                  \frac{1}{2} & \frac{1}{2} & 0 & 0\\
                  0 & 0 & \frac{1}{2} & \frac{1}{2}\\
                  0 & 0 & \frac{1}{2} & \frac{1}{2}
                  \end{bmatrix}\begin{bmatrix}
                      1\\
                      0\\
                      0\\
                      0
                  \end{bmatrix}\\
                  &=\begin{bmatrix}
\frac{1}{2} & \frac{1}{2} & 0 & 0
\end{bmatrix}\begin{bmatrix}
                      1\\
                      0\\
                      0\\
                      0
                  \end{bmatrix}
                  \begin{bmatrix}
1 & 0 & 0 & 0
\end{bmatrix}\begin{bmatrix}
                  \frac{1}{2} & \frac{1}{2} & 0 & 0\\
                  \frac{1}{2} & \frac{1}{2} & 0 & 0\\
                  0 & 0 & \frac{1}{2} & \frac{1}{2}\\
                  0 & 0 & \frac{1}{2} & \frac{1}{2}
                  \end{bmatrix}\begin{bmatrix}
                      1\\
                      0\\
                      0\\
                      0
                  \end{bmatrix}%\\
                  %&
                  =\frac{1}{2}\begin{bmatrix}
1 & 0 & 0 & 0
\end{bmatrix}\begin{bmatrix}
                  \frac{1}{2} & \frac{1}{2} & 0 & 0\\
                  \frac{1}{2} & \frac{1}{2} & 0 & 0\\
                  0 & 0 & \frac{1}{2} & \frac{1}{2}\\
                  0 & 0 & \frac{1}{2} & \frac{1}{2}
                  \end{bmatrix}\begin{bmatrix}
                      1\\
                      0\\
                      0\\
                      0
                  \end{bmatrix}%\\
                  %&
                  =\frac{1}{2}\begin{bmatrix}
\frac{1}{2} & \frac{1}{2} & 0 & 0
\end{bmatrix}\begin{bmatrix}
                      1\\
                      0\\
                      0\\
                      0
                  \end{bmatrix}=\frac{1}{2}(\frac{1}{2})=\frac{1}{4}.&&
\end{flalign*}
Hence
$$\frac{1}{2}=X^{\top}(T^{+}TA)^{+}(T^{+}TA)X \nleq X^{\top}(T^{+}TA)^{+}(AT^{+}T)^{+}XX^{\top}(AT^{+}T)(T^{+}TA)X=\frac{1}{4},$$
\end{Example}
this proves that the inequality used at page 3153 of \cite{ChetelatWells} is incorrect.
%\newpage
\subsection{Theorem~2 of \cite{ChetelatWells} revisited}\label{subsec:theorem}
Below, we revise Theorem~2 of \cite{ChetelatWells}. To this end, let $X\sim \mathcal{N}_p(\theta,\Sigma)$ and $Y\sim \mathcal{N}_{n\times p}(0,I_n\otimes\Sigma)$ and let $F=X^{\top}S^{+}X$. Without an assumption about the rank of the matrix $S$, some steps in proof of \cite{ChetelatWells} are wrong. Indeed, we give a counter-example which shows that if the rank of $S$ is less than or equal to 2, $\textnormal{E}\left(\ds{\frac{1}{F}}\right)=\infty$ and this contradicts an important step used in proof of Theorem~2 of \cite{ChetelatWells}.  This motivates us to also revise the statement of the theorem as well as the incorrect steps in the proof given in \cite{ChetelatWells}.
\begin{theorem}
    Let $X\sim \mathcal{N}_p(\theta,\Sigma)$ and $Y\sim \mathcal{N}_{n\times p}(0,I_n\otimes\Sigma)$ and for $A$ the symmetric positive definite square root of $\Sigma$, let $\Tilde{Y}=YA^{-1}$. Let $r$ be any bounded differentiable non-negative function $r:\mathbb{R}\longrightarrow [0,C_1]$ with bounded derivative $|r^{\prime}|\le C_2$. Define
   % \begin{flalign*}
        $G=\frac{r^2(X^{\top}S^{+}X)}{(X^{\top}S^{+}X)^2}S^{+}XX^{\top}S^{+}S$,
    %\end{flalign*}
    and $H=AGA^{-1}$. Let $S=Y^{\top}Y$, let $R=\textnormal{rank}(S)$ and suppose that  $\textnormal{P}(R>2)=1$. Then
    \begin{flalign*}
        \textnormal{E}\left[\left|\textnormal{div}_{\textnormal{\textnormal{vec}}(\Tilde{Y})}\textnormal{\textnormal{vec}}(\Tilde{Y}H)\right|\right]<\infty.
    \end{flalign*}
\end{theorem}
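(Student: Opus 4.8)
\noindent\emph{Proof plan.} The plan is to compute the divergence in closed form, to evaluate the resulting double sum \emph{exactly} --- this is the step that the argument of \cite{ChetelatWells} mishandled --- and then to reduce everything to the single integrability statement $\textnormal{E}[1/F]<\infty$, which holds precisely because $R\ge 3$ almost surely. Since $p>n$ and $\Sigma\succ 0$, we have $\textnormal{rank}(Y)=\textnormal{rank}(S)=n$ on an open event of probability one (so the hypothesis $\textnormal{P}(R>2)=1$ just amounts to $n\ge 3$); on that event $S$ has locally constant rank, $S^{+}$ is differentiable, and the singular complement is null and hence irrelevant for the expectation.

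Writing the divergence as $\sum_{i,j}\partial(\Tilde{Y}H)_{ij}/\partial \Tilde{Y}_{ij}$ and expanding the product, I would first obtain
\[
\textnormal{div}_{\textnormal{vec}(\Tilde{Y})}\textnormal{vec}(\Tilde{Y}H)=n\,\textnormal{tr}(H)+\sum_{i,j,k}\Tilde{Y}_{ik}\,\frac{\partial H_{kj}}{\partial \Tilde{Y}_{ij}},
\]
where $\textnormal{tr}(H)=\textnormal{tr}(G)=r^{2}(F)/F$ because $S^{+}SS^{+}=S^{+}$. For the second term I would substitute the standard derivative of the Moore--Penrose inverse, $\partial S^{+}/\partial t=-S^{+}(\partial_{t}S)S^{+}+(S^{+})^{2}(\partial_{t}S)(I-SS^{+})+(I-S^{+}S)(\partial_{t}S)(S^{+})^{2}$, together with $\partial S/\partial \Tilde{Y}_{ij}=A(e_{j}\Tilde{y}_{i}^{\top}+\Tilde{y}_{i}e_{j}^{\top})A$ (here $e_{j}$ is a coordinate vector, $\Tilde{y}_{i}=A^{-1}y_{i}$, and $y_{i}$ is the $i$-th row of $Y$), and then expand by the product rule over the three factors $r^{2}(F)$, $F^{-2}$, $S^{+}XX^{\top}S^{+}S$ of $G$. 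This produces a fixed, finite list of summands.

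The crux is to carry out the sums $\sum_{i,j,k}$ in closed form, using the identities $\sum_{i}y_{i}y_{i}^{\top}=S$, $SS^{+}=S^{+}S=:P_{S}$ (a symmetric projector), $S^{+}=P_{S}S^{+}P_{S}$, $\textnormal{range}(S)=\textnormal{span}\{y_{1},\dots,y_{n}\}$ and $\textnormal{tr}(P_{S})=R$. The key observation is that every summand carrying a factor $I-P_{S}$ is contracted against some $y_{i}$ or against $X_{1}:=P_{S}X$ (note $S^{+}X=S^{+}X_{1}$ and $F=X_{1}^{\top}S^{+}X_{1}$), and since $(I-P_{S})y_{i}=0=(I-P_{S})X_{1}$ all such terms vanish, while the remaining terms telescope. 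For instance the contribution of $-S^{+}(\partial_{\Tilde{Y}_{ij}}S)S^{+}$ to $\partial_{\Tilde{Y}_{ij}}(S^{+}XX^{\top}S^{+}S)$, once summed against the weights $\Tilde{Y}_{ik}$, reassembled through $H=AGA^{-1}$, and multiplied by the undifferentiated scalar $r^{2}(F)/F^{2}$, collapses to $-\frac{r^{2}(F)}{F^{2}}\sum_{i}\big[(y_{i}^{\top}S^{+}X_{1})^{2}+(y_{i}^{\top}S^{+}y_{i})F\big]=-(1+R)\,r^{2}(F)/F$. Proceeding in this way for all summands one reaches a bound of the form $|\textnormal{div}_{\textnormal{vec}(\Tilde{Y})}\textnormal{vec}(\Tilde{Y}H)|\le c\,(1+R)\,(C_{1}^{2}F^{-1}+C_{1}C_{2})\,q(\textnormal{tr}\,S)$ for a universal constant $c$ and a polynomial $q$; in particular no negative power of $F$ beyond the first survives. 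This honest bookkeeping --- together with the cancellation of the $I-P_{S}$ terms and of all higher powers of $1/F$ --- is the main obstacle, and it is precisely what the false inequality of \Cref{counter-examplebound} was meant (incorrectly) to bypass.

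It then remains to show $\textnormal{E}\big[(\textnormal{tr}\,S)^{m}/F\big]<\infty$ for the finitely many $m$ that occur. Since $X$ and $S$ are independent, I would condition on $S$: on $\{R\ge 3\}$, $X_{1}=P_{S}X$ is Gaussian on the $R$-dimensional subspace $\textnormal{range}(S)$ with covariance $P_{S}\Sigma P_{S}\succeq\lambda_{\min}(\Sigma)P_{S}$, so $F=X_{1}^{\top}S^{+}X_{1}\ge\|X_{1}\|^{2}/\lambda_{\max}(S)$ and, by the monotonicity of inverse noncentral $\chi^{2}$ moments in the noncentrality parameter, $\textnormal{E}\big[\|X_{1}\|^{-2}\mid S\big]\le\big(\lambda_{\min}(\Sigma)(R-2)\big)^{-1}\le\lambda_{\min}(\Sigma)^{-1}$. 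Hence $\textnormal{E}[1/F\mid S]\le\lambda_{\max}(S)/\lambda_{\min}(\Sigma)\le\textnormal{tr}(S)/\lambda_{\min}(\Sigma)$, and, since every moment of $\textnormal{tr}(S)$ is finite for a (singular) Wishart matrix, the displayed expectations are finite. Combining this with $0\le R\le n$ a.s.\ and the bound of the previous paragraph yields $\textnormal{E}\,|\textnormal{div}_{\textnormal{vec}(\Tilde{Y})}\textnormal{vec}(\Tilde{Y}H)|<\infty$. The assumption $\textnormal{P}(R>2)=1$ is used only in this last step, and the example of \Cref{subsec:rankAss} shows it cannot be dropped.
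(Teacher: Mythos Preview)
Your plan and the paper's proof share the same two--step architecture: bound $|\textnormal{div}_{\textnormal{vec}(\tilde Y)}\textnormal{vec}(\tilde YH)|$ by a constant multiple of $C_1^2/F+C_1C_2$, then show $\textnormal{E}[1/F]<\infty$ using $R\ge3$ a.s. The execution differs in both halves.

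For the divergence, the paper does not re-derive anything; it simply quotes the exact identity
\[
\textnormal{div}_{\textnormal{vec}(\tilde Y)}\textnormal{vec}(\tilde YH)=\frac{r^2(F)}{F}\bigl(n+p-2\,\textnormal{tr}(SS^{+})+3\bigr)-4r(F)r'(F)
\]
from the original Ch\'etelat--Wells calculation and applies the triangle inequality. Your product-rule expansion with the Moore--Penrose derivative is the correct way to recover this, and your sample term checks, but the bound you announce --- with an extra factor $q(\textnormal{tr}\,S)$ --- is looser than what actually emerges: every $y_i$ is contracted against an $S^{+}$, so only $R=\textnormal{tr}(SS^{+})$ survives, never $\textnormal{tr}(S)$. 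This is harmless (you then prove $\textnormal{E}[(\textnormal{tr}\,S)^m/F]<\infty$ anyway), but it signals that the ``honest bookkeeping'' was left unfinished.

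For $\textnormal{E}[1/F]<\infty$ the two routes genuinely differ. You condition on the full matrix $S$, set $X_1=P_SX$ with $P_S=SS^{+}$, use $F\ge\|X_1\|^{2}/\lambda_{\max}(S)$ together with $P_S\Sigma P_S\succeq\lambda_{\min}(\Sigma)P_S$ on $\textnormal{range}(S)$, and invoke the inverse noncentral $\chi^2_R$ bound to get $\textnormal{E}[1/F\mid S]\le\textnormal{tr}(S)/\lambda_{\min}(\Sigma)$. The paper instead conditions only on the rank $R$, selects the first $R$ coordinates via $C(R)=[I_R\;\vdots\;0]$, sets $X_{(1)}=C(R)X$ and $U=A^{-1}(R)X_{(1)}$ with $A^{2}(R)=C(R)\Sigma C(R)^{\top}$, asserts $F\ge\lambda^{\dagger}_{\min}(S^{+})\,X_{(1)}^{\top}X_{(1)}$, and uses $U^{\top}U\mid R\sim\chi^2_R(\delta_R)$ together with the Poisson-mixture representation. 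Your device --- projecting onto the actual random range of $S^{+}$ and conditioning on all of $S$ --- is the more canonical one and exploits the independence of $X$ and $Y$ directly; the paper's coordinate-selection device is more ad hoc but produces a fully explicit constant, $np\,\lambda_{\max}(\Sigma)\sum_{j=3}^{p}\textnormal{tr}(A^{-2}(j))$.
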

    \begin{proof}
     We have
     %\begin{flalign*}
         $\textnormal{div}_{\textnormal{vec}(\Tilde{Y})}\textnormal{vec}(\Tilde{Y}H) = \frac{r^2(F)}{F}(n+p-2\textnormal{tr}(SS^{+})+3)-4r(F)r^{\prime}(F)$.
     %\end{flalign*}
        Then, by the triangle inequality, we get
        \begin{flalign*}
            \left|\textnormal{div}_{\textnormal{vec}(\Tilde{Y})}\textnormal{vec}(\Tilde{Y}H)\right| & \leq \left|\frac{r^2(F)}{F}(n+p-2\textnormal{tr}(SS^{+})+3)\right|+\left|4r(F)r^{\prime}(F)\right|, %\\
 %           &\leq \frac{C^2_1}{F}|n+p-2\textnormal{tr}(SS^{+})+3|+4C_1C_2&&
        \end{flalign*}
        this gives
               \begin{flalign*}
            \left|\textnormal{div}_{\textnormal{vec}(\Tilde{Y})}\textnormal{vec}(\Tilde{Y}H)\right| \leq \frac{C^2_1}{F}\left|n+p-2\textnormal{tr}(SS^{+})+3\right|+4C_1C_2.&&
        \end{flalign*}
        Therefore, since $\textnormal{tr}(SS^{+})=\min(n,p)$, we get
        \begin{flalign*}
            \textnormal{E}[ \left|\textnormal{div}_{\textnormal{vec}(\Tilde{Y})}\textnormal{vec}(\Tilde{Y}H)\right|] \leq C^2_1\left|n+p-2min(n,p)+3\right|\textnormal{E}[1/F]+4C_1C_2&&
        \end{flalign*}
        Thus, the proof is completed if we prove that $\textnormal{E}\left[\ds{1/F}\right]<+\infty$. To this end, let $C(R)$ be $R \times p$-matrix of the form $C(R)=[I_{R}\vdots0_{R\times (p-R)}]$ and let $X_{(1)} = C(R)X$.   Let $\lambda^{\dag}_{\min}(S^{+})$ and $\lambda_{\max}^{\dag}(S^{+})$ be the smallest and biggest nonzero eigenvalues of $S^{+}$ respectively. Since $S^{+}$ is semi-positive definite,  %we have
        \begin{flalign}
           \lambda_{\min}^{\dag}(S^{+}){ X^{\top}_{(1)}X_{(1)}}\leqslant {X^{\top}S^{+}X} \leqslant \lambda_{\max}^{\dag}(S^{+}){ X^{\top}_{(1)}X_{(1)}}.\label{ineq_Fmain1}
       \end{flalign}
       Since $X\sim \mathcal{N}_{p}(\theta,\Sigma)$, we have $$X_{(1)}=C(R)X\big|R\sim \mathcal{N}_{R}(C(R)\theta,(C(R)\Sigma C^{\top}(R))).$$ Note that, since $\Sigma$ is positive definite matrix, $C(R)\Sigma C^{\top}(R)$ is positive definite matrix with probability one. Let $A(R)=\left(C(R)\Sigma C^{\top}(R)\right)^{1/2}$ and let $U=A^{-1}(R)X_{(1)}$. Then,
\begin{flalign}
   \lambda_{\min}(A^{2}(R))U^{\top}U\leqslant U^{\top}A^{2}(R)U \leqslant U^{\top}U \lambda_{\max}(A^{2}(R)),\label{ineq_Fmain2}
\end{flalign}
where $ \lambda_{\min}\left(A^{2}(R)\right)$ and $\lambda_{\max}\left(A^{2}(R)\right)$ are the smallest and biggest eigenvalues of the positive definite matrix $A(R)$ respectively. Therefore, by \eqref{ineq_Fmain1} and \eqref{ineq_Fmain2}, we get
\begin{eqnarray}
  \frac{1}{F}\leqslant \frac{1}{\lambda^{\dag}_{\min}(S^{+})\lambda_{\min}(A^{2}(R))U^{\top}U}
     =\frac{\lambda_{\max}^{\dag}(S)\lambda_{\max}(A^{-2}(R))}{U^{\top}U},\label{ineqF}
\end{eqnarray}
       where $\lambda_{\max}^{\dag}(S)$ and $\lambda_{\max}\left(A^{-2}(R)\right)$ are the biggest nonzero eigenvalues of $(S^{+})^{+}=S$ and $(A^{2}(R))^{+}=A^{-2}(R)$ respectively.
We also have
       \begin{flalign*}
          U\big|R \sim \mathcal{N}_{R}\left(A^{-1}(R)C(R)\theta,A^{-1}(R)A^{2}(R)A^{-1}(R)\right)
       \end{flalign*}
       Therefore,
       \begin{flalign}
             U\big|R \sim \mathcal{N}_{R}\left(A^{-1}(R)C(R)\theta,I_{R}\right).\label{dvecU}
       \end{flalign}
        Note that $\lambda_{\max}^{\dag}(S)$ depends on $S$, $\lambda_{\max}(A^{-2}(R))$ depends on $R$, and $U$ depends on $R$ and $X$. Since, conditionally to $R$, $S$ and $X$ are independent we get
       \begin{flalign*}
         \textnormal{E}\left[\frac{\lambda_{\max}^{\dag}(S)\lambda_{\max}(A^{-2}(R))}
         {U^{\top}U}\right] &= \textnormal{E}\left[\textnormal{E}\left[\frac{\lambda_{\max}^{\dag}(S)\lambda_{\max}(A^{-2}(R))}{U^{\top}
         U}\big|R\right]\right]\\&=\textnormal{E}\left[\lambda_{\max}(A^{-2}(R))\textnormal{E}\left[\lambda_{\max}^{\dag}(S)\big|R\right]
         \textnormal{E}\left[\frac{1}{U^{\top}U}\big|R\right]\right].&&
       \end{flalign*}
        Further, let $\lambda_{\max}(\Sigma)$ be the biggest eigenvalue of $\Sigma$. Then,
       \begin{flalign*}
           \lambda_{\max}^{\dag}(S) \leqslant \textnormal{tr}(S)=\textnormal{tr}(\Sigma^{\frac{1}{2}}\Sigma^{-\frac{1}{2}}S\Sigma^{-\frac{1}{2}}
           \Sigma^{\frac{1}{2}})
           \leqslant &\lambda_{\max}(\Sigma)\textnormal{tr}((Y\Sigma^{-\frac{1}{2}})^{\top}Y\Sigma^{-\frac{1}{2}})\\&
           =\lambda_{\max}(\Sigma)\textnormal{vec}^{\top}(Y\Sigma^{-\frac{1}{2}})\textnormal{vec}(Y\Sigma^{-\frac{1}{2}}),&&
       \end{flalign*}
       where $\textnormal{vec}(Y\Sigma^{-\frac{1}{2}}) \sim \mathcal{N}_{np}(0,I_{p} \otimes I_{n})$. Therefore,  we get
       \begin{flalign*}
\textnormal{E}_{\scriptscriptstyle \bm{\theta}}[\lambda_{\max}^{\dag}(S)] \leqslant& \lambda_{\max}(\Sigma)\textnormal{E}[\textnormal{vec}^{\top}(Y\Sigma^{-\frac{1}{2}})\textnormal{vec}(Y\Sigma^{-\frac{1}{2}})]
           \\&=\lambda_{\max}(\Sigma)\textnormal{tr}(I_{p} \otimes I_{n})=\lambda_{\max}(\Sigma)\textnormal{tr}(I_{np})=\lambda_{\max}(\Sigma)np. %\label{boundLambdamax}%&&
       \end{flalign*}
       Hence
       \begin{eqnarray}
        \textnormal{E}[\lambda_{\max}^{\dag}(S)] \leqslant \lambda_{\max}(\Sigma)np. \label{boundLambdamax}
        \end{eqnarray}
        Since $U\big|R \sim \mathcal{N}_{R}\left(A^{-1}(R)C(R)\theta,I_{R}\right)$, we get
       \begin{eqnarray}
           U^{\top}U\big|R\sim \chi^{2}_{R}(\delta_R)\label{dvecUvecU}
       \end{eqnarray}
       where $\delta_R = \left(A^{-1}(R)C(R)\theta\right)^{\top}\left(A^{-1}(R))C(R)\theta\right)$. %\\
       Let $Z$ be a random variable such that $Z\big|R\sim \textnormal{Poisson}(\delta_R/2)$. By \eqref{dvecUvecU}, we have
       \begin{flalign*}
            &\textnormal{E}\left[(U^{\top}U)^{-1}\big|R\right]=
           \textnormal{E}\left[\textnormal{E}[(U^{\top}U)^{-1}\big|R,Z]\big|R\right]%\\
           = \textnormal{E}\left[\textnormal{E}[(\chi^{2}_{R+2Z})^{-1}\big|R,Z]\big|R\right].
       \end{flalign*}
       Further, since $\textnormal{P}_{\scriptscriptstyle \bm{\theta}}(R>2)=\textnormal{P}_{\scriptscriptstyle \bm{\theta}}(R\geq 3)=1$ and $\textnormal{P}_{\scriptscriptstyle \bm{\theta}}\left(Z\geq 0\right)=1$, then $R+2Z-2\geqslant1$ with probability one and then,
       \begin{flalign*}
            &\textnormal{E}\left[(U^{\top}U)^{-1}\big|R\right]
           =\textnormal{E}\left[\frac{2^{-1}\Gamma\left(\frac{R+2Z}{2}-1\right)}{\Gamma\left(\frac{R+2Z}{2}\right)}\Big|R\right].
       \end{flalign*}
       and then, since $R+2Z-2\geq 1$ with probability one,
        \begin{eqnarray*}
           \textnormal{E}[(U^{\top}U)^{-1}\big |R]
           =\textnormal{E}\left[\ds{\frac{1}{R+2Z-2}}\big|R\right] \leqslant 1,
       \end{eqnarray*}
       almost surely. Therefore, together with \eqref{ineqF}, we get
    \begin{flalign*}
       \textnormal{E}\left[\frac{1}{F}\right]\leqslant \textnormal{E}\left[\lambda_{\max}(A^{-2}(R))\textnormal{E}\left[\lambda_{\max}^{\dag}(S)\big|R\right]\textnormal{E}
       \left[\frac{1}{U^{\top}U}\big|R\right]\right] \leqslant &\textnormal{E}\left[\lambda_{\max}(A^{-2}(R))\textnormal{E}\left[\lambda_{\max}^{\dag}(S)\big|R\right]\right]
       .&&
    \end{flalign*}
 This gives
         $\ds\textnormal{E}\left[1/F\right]\leqslant
         \textnormal{E}\left[\textnormal{tr}(A^{-2}(R))\textnormal{E}\left[\textnormal{tr}(S)\big|R\right]\right]\leqslant np\lambda_{\max}(\Sigma)\textnormal{E}\left[\textnormal{tr}(A^{-2}(R))\right]$.
    Note that $ \textnormal{P}\left(3\leqslant R\leqslant p\right)=1$ and then
    \begin{eqnarray}
    \textnormal{E}\left[\textnormal{tr}(A^{-2}(R))\right]=\sum_{j=3}^{p}\textnormal{tr}(A^{-2}(j))
    \textnormal{P}\left(R=j\right)\leqslant \sum_{j=3}^{p}\textnormal{tr}(A^{-2}(j)).\label{traceofinverseofA}
    \end{eqnarray}
    For $j=3,4,\dots,p-1$, $A^{2}(j)=[I_{j}\vdots0_{j\times (p-j)}]\Sigma[I_{j}\vdots0_{j\times (p-j)}]^{\top}$ and, we set $A^{2}(p)=\Sigma$. Thus, for $j=3,4,\dots,p$, $A^{2}(j)$ is positive definite matrix and then, $0<\textnormal{tr}(A^{-2}(j))<\infty$. Hence,
    $0<\ds\sum_{j=3}^{p}\textnormal{tr}(A^{-2}(j))<+\infty$.
Therefore, together with \eqref{boundLambdamax} and \eqref{traceofinverseofA}, we get
    \begin{flalign*}
        \textnormal{E}\left[1/F\right]\leqslant \textnormal{E}\left[\lambda_{\max}^{\dag}(S)\lambda_{\max}(A^{-2}(R))\right]
        \leqslant np\lambda_{\max}(\Sigma)\sum_{j=3}^{p}\textnormal{tr}(A^{-2}(j))<\infty.
    \end{flalign*}
    Hence,
    $\textnormal{E}\left[1/F\right]\leqslant np\lambda_{\max}(\Sigma)\ds\sum_{j=3}^{p}\textnormal{tr}(A^{-2}(j)) < \infty,$ which completes the proof.
    \end{proof}

%\newpage
\subsection{The non-existence case of the expectation of $1/F$}\label{subsec:rankAss}
In the following example, we highlight the importance of requiring the assumption about the $\textnormal{rank}(S)>2$ with probability one. In particular, we show that, when $\textnormal{P}(R \leqslant 2)>0$, it is possible to have $\textnormal{E}\left[\frac{1}{F}\right]=\infty$, what makes obsolete one of the important step in proof of Theorem~2 of \cite{ChetelatWells}.
\begin{Example}
    Let $X\sim \mathcal{N}_{2}\left(\begin{bmatrix}
    1 \\
    1
    \end{bmatrix},I_2\right)$ and $Y = \begin{bmatrix}
    U \vdots V
    \end{bmatrix}$ where $U$ and $V$ are independent random variable distributed as $\mathcal{N}\left(0,\,1\right)$ i.e.  $Y \sim \mathcal{N}_{1\times2}(0,1\otimes I_2)$. Let $R$ be the rank of $S=Y^{\top}Y$.
    Let $S^{+}=PD^{+}P^{\top}$ be the spectral decomposition of $S^{+}$ where $D^{+}=diag(d_1,0)$. Since
    \begin{flalign*}
        F=X^{\top}S^{+}X=X^{\top}PD^{+}P^{\top}X=(P^{\top}X)^{\top}D^{+}P^{\top}X.&
    \end{flalign*}
    Therefore
    \begin{flalign*}
        \frac{F}{d_1}=(P^{\top}X)^{\top}\begin{bmatrix}
            1 & 0 \\
            0 & 0
        \end{bmatrix}P^{\top}X.
    \end{flalign*}
   Note that $d_1$ and $P$ are  functions of $(U,V)$ and note that $P^{\top}X\big| U,V\sim \mathcal{N}_{2}\left(P^{\top}\begin{bmatrix}
    1 \\
    1
    \end{bmatrix},I_2\right)$. Then, % and we get
    \begin{flalign*}
      \frac{X^{\top}S^{+}X}{d_1} \big|U,V \sim \chi^2_1(\delta_{0})
    \end{flalign*}
            where $\delta_{0} = \begin{bmatrix}
        1 & 1
    \end{bmatrix}P\begin{bmatrix}
        1 & 0 \\
        0 & 0
    \end{bmatrix}P^{\top}\begin{bmatrix}
        1\\
        1
    \end{bmatrix}.$
    %\\
    Therefore, % we get
     \begin{flalign*}
        \textnormal{E}\left[\frac{d_1}{F}\big|U,V\right]= \textnormal{E}\left[\frac{d_1}{X^{\top}S^{+}X}\big|U,V\right]=\textnormal{E}\left[(\chi^2_1(\delta_{0}))^{-1}\big|U,V\right]=+\infty,&&
     \end{flalign*}
     almost surely.
     Thus
     \begin{eqnarray*}
        \textnormal{E}\left[\frac{1}{F}\big|U,V\right]=\frac{1}{d_1}\textnormal{E}\left[\frac{d_1}{F}\big|U,V\right]=+\infty,
     \end{eqnarray*}
 \mbox{ with probability one.}     Hence $$\textnormal{E}\left[\frac{1}{F}\right] = +\infty.$$
\end{Example}
\bibliographystyle{abbrvnat}
\bibliography{AoS_subm_Jul18_2023}
\end{document}